\newtheorem{thm}{Theorem}[section]
\newtheorem{THM}{Theorem}
\newtheorem{prop}[thm]{Proposition}
\newtheorem{lemma}[thm]{Lemma}
\theoremstyle{definition}
\newtheorem{remark}[thm]{Remark}
\newcommand{\nc}{\newcommand}
\nc {\hh}{\check{h}}
\nc {\DD}{\mathcal{D}}
\nc {\RR}{\mathcal{R}}
\nc {\Pp}{\mathbb{P}}
\nc {\Ss}{\mathcal{S}}
\nc {\PP}{\mathbb{P}^{2}}
\nc {\Pd}{ \check{\mathbb{P}}^{2}}
\nc {\WW}{\mathcal{W}}
\nc {\Sym}{\mathrm{Sym}}
\nc {\OO}{\mathcal{O}}
\nc {\CC}{\mathbb{C}}
\nc {\EE}{\mathcal{E}}
\nc {\MM}{\mathcal{M}}
\nc {\KK}{\mathcal{K}}
\nc {\PW}{\mathcal{P}}
\nc {\NW}{\mathcal{N}_{\WW}}
\nc {\FF}{\mathcal{F}}
\nc {\GG}{\mathcal{G}}
\nc {\ZZ}{\mathcal{Z}}
\nc {\LL}{\mathcal{L}}
\nc {\HH}{\mathcal{H}}
\nc {\NN}{\mathcal{N}}
\nc {\VV}{\mathcal{V}}
\nc {\Ww}{\mathbb{W}}
\nc {\QQ}{\mathbb{Q}}
\nc {\II}{\mathcal{I}}
\nc {\tang}{\mathrm{Tang}}
\nc {\Diff}{\mathrm{Diff}}
\nc {\Diffun}{\mathrm{Diff}^1(\CC^2,0)}
\nc {\pr}{\mathrm{pr}}
\nc {\cod}{\mathrm{codim}}
\nc {\id}{\mathrm{id}}
\nc {\ord}{\mathrm{ord}}
\nc {\Fol}{\mathrm{Fol}}
\nc {\rank}{\mathrm{rank}}
\nc {\Lie}{\mathrm{Lie}}
\nc {\Z}{\mathbb{Z}}
\begin{document}

\title{Neighborhoods of rational curves without functions}
\author[M. Falla Luza, F. Loray]
{Maycol Falla Luza$^{1}$, Frank Loray$^2$}
\address{\newline $1$ Universidade Federal Fluminense, Rua M\'ario Santos Braga S/N, Niter\'oi, RJ, Brazil.\hfill\break
$2$ Univ Rennes, CNRS, IRMAR - UMR 6625, F-35000 Rennes, France.} 
\email{$^1$ maycolfl@gmail.com} \email{$^2$ frank.loray@univ-rennes1.fr}
\thanks{
The second author is supported by CNRS, Henri Lebesgue Center and ANR-16-CE40-0008 project ``{\it Foliage}''. 
The authors also thank Brazilian-French Network in Mathematics and CAPES-COFECUB Project Ma 932/19
``{\it Feuilletages holomorphes et int\'eractions avec la g\'eom\'etrie}''.
We finally thank Jorge Vit\'orio Pereira for helpfull discussions on the subject.}
\date{\today}

\subjclass{} \keywords{Foliation, Projective Structure, Rational Curves}

\begin{abstract}
We prove the existence of (non compact) complex surfaces with a smooth rational curve embedded such that there does not 
exist any formal singular foliation along the curve. In particular, at arbitray small neighborhood of the curve,  
any meromorphic function is constant. This implies that the Picard group is not countably generated.
\end{abstract}

\maketitle

\section{Introduction}

Let $S$ be a complex surface and $\iota:\mathbb P^1\hookrightarrow\subset S$ be an embedded smooth rational curve.
Denote by $C\cdot C\in\mathbb Z$ the self-intersection of the image curve $C=\iota(\mathbb P^1)$: 
it coincides with the degree $d$ of the normal bundle $N_C:=T_S/T_C$, i.e. $\iota^*N_C=\mathcal O_{\mathbb P^1}(d)$ 
where $d:=C\cdot C$.
We are interested in properties of the germ of neighborhood $(S,C)$, i.e. of any sufficiently small neighborhood of $C$
in the surface $S$. For instance, we say that $(S,C)$ is linearizable if any sufficiently small neighborhood $C\subset V\subset S$
is equivalent to a neighborhood of the zero section in the total space of $N_C$.

\begin{figure}[ht!]
  \centering
    \includegraphics[scale=0.4]{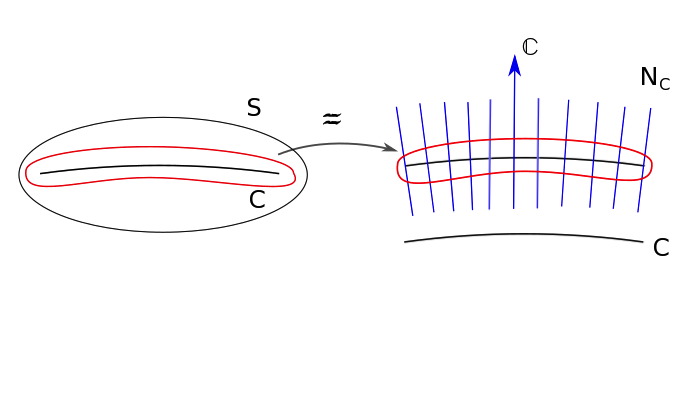}
  \caption{Linearization}
  \label{fig:linealizacion}
\end{figure}

It has been well-known, since the celebrated work of Grauert  \cite{Grauert}, that the neighborhood germ $(S,C)$ is linearizable
whenever the self-intersection number of the curve is negative: $C\cdot C<0$. In that case, the curve can be contracted to a point:
there is a morphism $\phi:(S,C)\to(S',p)$ sending $C$ to the point $p$ of a germ of analytic surface $S'$ (which is singular at $p$
if $d<1$). In particular, the ring of germs of holomorphic functions at $C$ is not
finitely generated, comparable with the ring $\mathbb C\{X,Y\}$ of convergent power-series. 

When $C\cdot C=0$, it follows from Kodaira's Deformation Theory \cite{Kodaira} (and Fischer-Grauert Theorem \cite{FischerGrauert}) 
that the neighborhood
is again linearizable, i.e. a product $C\times(\mathbb C,0)$ (see \cite{Savelev}). 
The ring of holomorphic function germs $\mathcal O(S,C)\simeq\mathcal O(\mathbb C,0)$ consists of function germs 
the transversal factor $(\mathbb C,0)$, i.e. isomorphic to the ring $\mathbb C\{X\}$.

On the other hand, when $C\cdot C>0$ is fixed, we have no more rigidity: the deformation space is infinite dimensional.
More precisely, the analytic classification of such neighborhoods up to biholomorphisms is parametrized by some 
space of diffeomorphisms comparable with $\mathbb C\{X,Y\}$ as shown by Mishustin \cite{Mishustin}
(see also \cite{FrankMaycol}). Such a neighborhood is pseudo-concave, and any holomorphic function
is constant; following Andreotti \cite{Andreotti} the field of meromorphic function germs $\mathcal M(S,C)$ 
has transcendance degree $\le 2$, i.e. is at most a finite extension of the field $\mathbb C(X,Y)$ of rational functions.

The goal of this note is to prove the existence of neighborhoods with $C\cdot C>0$ without non constant meromorphic functions.

\begin{THM}\label{Neighborhood without function}
For each $d\in\mathbb Z_{>0}$, there exists a germ of surface neighborhood $(S,C)$ with $C$ rational, $C\cdot C=d$,
and such that any meromorphic function germ $f\in\mathcal M(S,C)$ is constant.
\end{THM}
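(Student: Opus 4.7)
My plan is to prove the contrapositive: show that the existence of a non-constant meromorphic function germ on $(S,C)$ forces the existence of a formal singular foliation along $C$, and then produce neighborhoods carrying no such foliation. For the first implication, if $f\in\MM(S,C)$ is non-constant, then $df$ is a non-zero meromorphic $1$-form, and its kernel defines a codimension-one holomorphic foliation $\FF$ on the complement of its zero/pole divisor. Since $(S,C)$ is a smooth surface, $\FF$ extends to the whole neighborhood as a singular foliation with isolated singularities; restricting to the formal completion of $(S,C)$ along $C$ yields the desired formal singular foliation. Thus it suffices to construct $(S,C)$ with $C\cdot C=d$ admitting \emph{no} formal singular foliation along $C$.

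The bulk of the work is this construction. I would use a Mishustin-type cocycle description: formal neighborhoods $(S,C)$ with $C\cdot C=d$ are classified, up to formal equivalence, by transition data on two affine charts of $\Pp^1$, of the shape $(x,y)\mapsto (1/x,\,x^{-d} y+\sum_{i,j} a_{i,j}\,x^i y^j)$ modulo formal gauge changes fixing $C$. This moduli is a projective limit of finite-dimensional affine spaces whose dimensions tend to infinity with the jet order. A formal singular foliation along $C$ can be encoded analogously by a cocycle of local vector fields up to multiplication by units; the requirement that such a foliation be compatible with a prescribed neighborhood cocycle translates, order by order, into a finite system of polynomial equations in the coefficients $a_{i,j}$ and in the foliation coefficients.

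The strategy to conclude is to stratify the ``foliated'' locus inside the moduli of neighborhoods according to the combinatorial invariants of the foliation---its tangency order with $C$, its singular scheme on $C$, the collection of Camacho--Sad indices, and so on---check that each stratum has finite dimension at each jet level, and take a countable union over these invariants. Since the ambient moduli is infinite-dimensional at each sufficiently high jet level, a Baire-type genericity argument in the projective limit produces cocycle data avoiding every stratum; any neighborhood realizing such data admits no formal singular foliation along $C$, hence, by the first step, no non-constant meromorphic function germ. The hard part will be controlling this stratification uniformly and showing that the union of foliated strata really sits in \emph{infinite} codimension in the projective limit (not merely in each finite jet): this requires quantitative control on how a low-order jet of a formal foliation forces polynomial relations among the higher-order coefficients of the neighborhood cocycle, so that passing from jet $n$ to jet $n+1$ always costs strictly more parameters than those available to the foliation.
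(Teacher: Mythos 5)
Your first reduction is exactly the paper's: a non-constant $f\in\mathcal M(S,C)$ yields the singular foliation $\ker(df)$, so it suffices to build $(S,C)$ with $C\cdot C=d$ carrying no (formal) singular foliation. The gap is in the construction. You propose to work directly with the Mishustin cocycle of the $(+d)$-neighborhood and to show that the ``foliated'' locus has positive (indeed infinite) codimension at each jet level by a parameter count. But on a $(+d)$-neighborhood with $d>0$ there is no product structure, and a formal foliation along $C$ is given in the formal completion by $\omega=\sum_{j\ge0}\big(a_j(x)\,dx+b_j(x)\,dy\big)y^j$ where $a_j,b_j$ are sections of line bundles on $\mathbb P^1$ whose degrees grow \emph{linearly in $j$} (because the normal bundle is $\mathcal O_{\mathbb P^1}(d)$ with $d>0$). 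Hence the space of $N$-jets of candidate foliations grows \emph{quadratically} in $N$, exactly like the space of $N$-jets of neighborhood cocycles, and the naive comparison of dimensions that your Baire argument needs does not close. You flag this as ``the hard part,'' but it is precisely the point of the proof, and your stratification by tangency order, singular scheme and Camacho--Sad indices does not supply the missing uniform bound: none of these invariants caps the degrees of the $a_j,b_j$ across all orders $j$ simultaneously.

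The idea you are missing is the paper's geometric reduction: blow up $d+1$ points of $C$ and contract the strict transform, reducing to a germ of neighborhood of a bouquet of rational curves of self-intersection $0$ meeting at one point (and it suffices to treat two of them). By Savel'ev each $0$-neighborhood is globally $\mathbb P^1\times(\mathbb C,0)$, so any foliation restricted to one chart is defined by $P\,dx+Q\,dy$ with $P,Q$ \emph{polynomial of some fixed degree $k$} in the $\mathbb P^1$-variable, uniformly in the transverse variable. After stratifying by the type $(k,\nu)$, the $N$-jet space of foliations then grows only linearly in $N$ (about $4k+4$ new parameters per order), while the rank of the compatibility map $\Phi^*\omega_1\wedge\omega_2$ in the gluing diffeomorphism $\Phi$ grows like $N^2/2$; this is what makes the projection of the incidence variety a proper Zariski-closed subset at each jet level, and allows the countable-intersection/Baire step over all $(k,\nu)$ to conclude. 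Without this reduction (or some substitute producing a degree bound independent of the jet order), your argument does not establish that the foliated locus is avoidable.
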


In other words, the algebraic dimension of $(S,C)$ is zero. This result is quite surprising since it follows 
from Kodaira \cite{Kodaira} that the neighborhood is covered by rational curves. Precisely, the curve $C$ itself 
can be deformed in a $(d+1)$-dimensional family of rational curves $(\mathbb C^{d+1},0)\ni t\mapsto C_t\subset S$. 
For any two rational curves $C_{t_1},C_{t_2}$,
the corresponding line bundles $\mathcal O_S([C_{t_i}])$ must be non isomorphic, since otherwise the isomorphism 
would provide a meromorphic function with divisor $[C_{t_1}]-[C_{t_2}]$, therefore non constant.
In particular, the Picard group $\mathrm{Pic}(S,C)$ of germs of line bundles cannot be generated by a countable basis.

A non constant meromorphic function $f:(S,C)\dashrightarrow\mathbb P^1$ induces a singular holomorphic
foliation on the neighborhood germ $(S,C)$, which is singular at indeterminacy points of $f$, and whose leaves
are (connected components of) the fibers of $f$ outside of the singular set.
Then Theorem \ref{Neighborhood without function} is a direct consequence of the following.

\begin{THM}\label{Neighborhood without foliation}
For each $d\in\mathbb Z_{>0}$, there exists a germ of surface neighborhood $(S,C)$ with $C\cdot C=d$
and without any singular foliation.
\end{THM}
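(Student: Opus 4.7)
The plan is to parametrize the formal germs $(S,C)$ with $C\cdot C=d$ by an infinite-dimensional moduli of cocycles and then show that the locus of neighborhoods admitting \emph{any} formal singular foliation is a thin subset, so that a generic choice does the job.

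First I would set up a formal normal form along the rational curve. Cover $\mathbb{P}^1$ by two affine charts with coordinate $z$; the total space $N_C$ of $\OO_{\mathbb{P}^1}(d)$ has a compatible pair of fiber coordinates $(u_0,u_\infty)$ glued by $u_\infty=u_0/z^d$. Every formal neighborhood of $C$ is obtained from $N_C$ by perturbing the gluing,
$$u_\infty=\frac{u_0}{z^d}+\sum_{k\geq 2}\varphi_k(z)\,u_0^k,$$
where $\varphi_k$ lies in a finite-dimensional slice of $\CC[z,z^{-1}]$ after gauge fixing by fiber-preserving automorphisms (this is essentially the point of view of \cite{Mishustin,FrankMaycol}). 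This identifies the moduli of formal neighborhoods with a pro-finite-dimensional affine space $\NN_d=\varprojlim_N\NN_d^{\leq N}$, each stratum $\NN_d^{\leq N}$, collecting truncations up to order $N$ in $u_0$, being a genuine finite-dimensional affine scheme.

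Second, I would convert ``existence of a foliation on $(S,C)$'' into algebraic conditions on the cocycle. Any singular foliation $\FF$ either leaves $C$ invariant or is generically transverse to it. In the invariant case, Camacho--Sad gives $\sum_p\mathrm{CS}(\FF,C,p)=d$, which together with the choice of $\deg T_\FF|_C$ places the restricted data in a finite-dimensional moduli. In the transverse case, the tangency divisor $\tang(\FF,C)$ has degree bounded in terms of $d$ and $\deg T_\FF|_C$, again leaving finitely many possibilities for the restriction. In either situation, extending a given datum $\FF|_C$ one order further in $u_0$ amounts to solving an inhomogeneous linear equation whose right-hand side is determined by $\{\varphi_2,\ldots,\varphi_N\}$; I expect its cokernel to be of positive dimension that grows without bound in $N$. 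Thus foliated neighborhoods of each given type form a constructible subset $\Fol_{d,N}^{(\alpha)}\subset\NN_d^{\leq N}$ whose codimension tends to infinity with $N$.

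Third, I would conclude by a Baire/countability argument. The set of types $\alpha$ of the restricted data $(\FF|_C,\deg T_\FF|_C,\ldots)$ is countable, so $\bigcup_\alpha\Fol_{d,N}^{(\alpha)}$ is a countable union of proper constructible subvarieties, and for $N$ large misses points of $\NN_d^{\leq N}$ whose deeper extensions remain in the complement. An inverse-limit construction then produces a cocycle $\{\varphi_k\}$ giving a formal neighborhood carrying no formal singular foliation; realizing this cocycle analytically yields the desired germ, and Theorem A follows because a non-constant meromorphic function would produce a foliation by its level sets.

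The main obstacle will be the codimension estimate in the second step: one has to rule out \emph{all} possible foliations $\FF$ simultaneously, and their discrete invariants (order of tangency with $C$, Camacho--Sad spectrum, Baum--Bott numbers, degree of $T_\FF|_C$) are not a priori bounded. The crux is a uniform index-theoretic bound, in terms of $d$ and $N$, on the invariants of foliations that could possibly extend to order $N$ on some neighborhood, combined with a careful count of linear conditions versus free parameters at each extension step. This dimension balancing is the genuine analytical content of the theorem.
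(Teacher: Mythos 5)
Your overall strategy (stratify foliations into countably many types, show each type imposes conditions of unbounded codimension on the gluing data, conclude by a Baire-type argument) is the same as the paper's, but the crucial step is missing exactly where you flag it, and the direct parametrization you chose makes that step genuinely problematic rather than merely unfinished. On a $(+d)$-neighborhood the dimension count does \emph{not} obviously tip in your favor: by Riemann--Roch, the space of sections of a line bundle twisted by $N_C^{\otimes -k}$ over the $k$-th infinitesimal neighborhood grows linearly in $k$ when $d>0$, so the space of $N$-jets of candidate foliation data of a fixed discrete type grows quadratically in $N$ --- at the same rate as the space of $N$-jets of Mishustin cocycles $\{\varphi_k\}$. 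Your expectation that ``the cokernel grows without bound'' is therefore not a routine linear-algebra estimate but the whole theorem, and nothing in the proposal supplies it. Moreover, bounding the discrete invariants via Camacho--Sad and the tangency divisor only controls $\mathcal F|_C$; it does not bound the complexity of $\mathcal F$ on higher infinitesimal neighborhoods, which is where the extension problem lives.

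The idea you are missing is a preliminary geometric reduction that forces the foliation data into a space growing only \emph{linearly} in $N$. The paper blows up $d+1$ points of $C$ and contracts the strict transform, reducing to a germ of neighborhood of two rational curves of self-intersection $0$ meeting at one point; each such $0$-neighborhood is trivial (Savel'ev), i.e.\ $\mathbb P^1\times(\mathbb C,0)$, and there any foliation is defined by a $1$-form $P\,dx+Q\,dy$ with $P,Q$ \emph{polynomial of some degree $k$} in the $\mathbb P^1$-variable. Hence the $N$-jet of a foliation of type $(k,\nu)$ depends on only $O(kN)$ parameters, while the gluing diffeomorphism at the intersection point contributes $\sim N$ new coefficients at each order, so the compatibility map $\Phi^*\omega_1\wedge\omega_2=0$ has rank growing like $N^2/2$. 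This quadratic-versus-linear imbalance is what makes the codimension of the foliated locus go to infinity, and it is created by the reduction, not by the index-theoretic bookkeeping on $C$ itself. (Two smaller points: the countable union of the bad loci need not be contained in a proper closed subset, so one should argue as the paper does, choosing perturbations order by order with shrinking norms to keep convergence; and the formal-versus-analytic issue for the resulting foliation is handled by Hironaka--Matsumura, since the paper in fact excludes all \emph{formal} foliations.)
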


To prove Theorem \ref{Neighborhood without foliation}, we first reduce to a singular but simpler case as follows.
We note that after blowing-up $d+1$ distinct points on $C$, we get a neighborhood of $C'\cup E_1\cup\cdots\cup E_{d+1}$
where $C'$ is the strict transform of $C$ and $E_i$'s are the exceptional divisors. Moreover, all these $d+2$ rational curves
have self-intersection $-1$. After contracting $C'$, we get a neighborhood of $d+1$ rational curves $D_1\cup\cdots\cup D_{d+1}$ intersecting at $1$ point and having self-intersection $0$. If the neighborhood germ of $D_1\cup D_2$ inside this larger neighborhood 
germ does not carry a singular foliation, then we are done. This latter problem is simpler as $0$-neighborhoods are trivial:
a foliation in $\mathbb P^1\times(\mathbb C,0)$ is globally defined by a Pfaffian equation $\omega=0$ where the $1$-form $\omega$
takes the form
$$\omega=P(x,y)dx+Q(x,y)dy$$
where $P,Q$ are polynomials in $x$-variable. The idea of the proof is that, once we fix the degree of these polynomial, 
we observe that the number of entries in the patching map between the two neighborhoods grows fast in the jet spaces
that the entries of the foliations on both neighborhoods.
We actually prove that generic neighborhoods have no foliations.

\begin{figure}[ht!]
  \centering
    \includegraphics[scale=0.4]{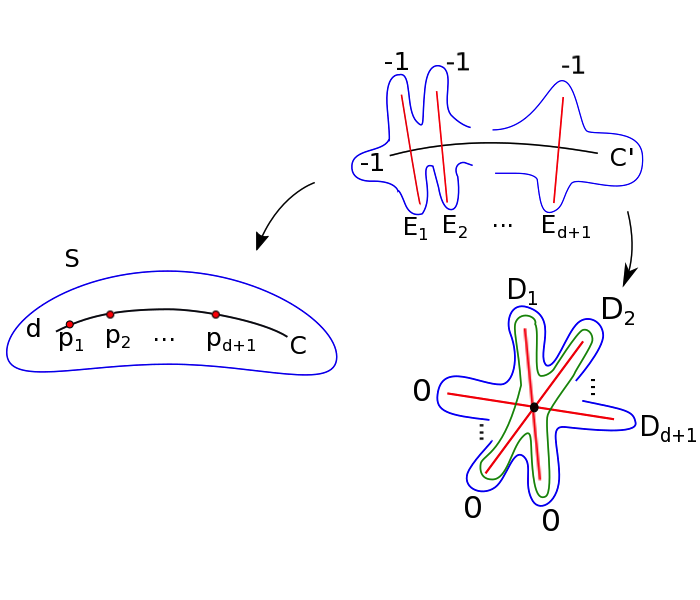}
  \caption{Reduction to 0-Neighborhoods}
  \label{fig:linealizacion}
\end{figure}

Following \cite{Hitchin1}, there is a duality between $1$-neighborhoods rational curves up to (semi-local) biholomorphisms,
and local Cartan projective structures on $(\mathbb C^2,0)$ up to local biholomorphisms. These latter ones consist in 
a collection of unparametrized geodesics; they can be defined as solutions $y(x)$ of a second-order differential equation
$$y''=a(x,y)+b(x,y)y'+c(x,y)(y')^2+d(x,y)(y')^3$$
with $a,b,c,d$ holomorphic once a system of coordinates $(x,y)\in(\mathbb C^2,0)$ has been choosen. 
A particular case, investigated in \cite{Hurtubise} from this duality point of view, is the case of Painlev\'e equations,
for instance the first one $y''=6 y^2+x$. We expect that dual $1$-neighborhoods of Painlev\'e equations have 
generic behaviour, in particular carrying no non-constant meromorphic functions.

\section{Glueing two 0-neighborhoods}

Following the blowing-up/contraction procedure explained in the introduction (see Figure \ref{fig:linealizacion}), 
in order to prove Theorem \ref{Neighborhood without foliation}, 
it is enough to prove the non existence of singular foliations for neighborhood germ $(V,D)$
of some bouquet $D=D_1\cup\cdots\cup D_{d+1}$ of rational curves having self-intersection $0$ in $V$, 
and intersecting transversally at a single point.
The resulting $(+d)$-neighborhood $(S,C)$, obtained by the above procedure, will admit no singular foliation as well.

As a germ, such a neighborhood $(V,D)$ can be obtained by gluing $0$-neighborhoods $(V_i,D_i)$, that are trivial by \cite{Savelev},
i.e. equivalent to $\mathbb P^1\times(\mathbb C, 0)$. A single $0$-neighborhood admits many foliations and meromorphic
functions. We prove that this is no more true for a bouquet of several $0$-neighborhoods. It is clearly enough
to prove this for a bouquet of two $0$-neighborhoods. Indeed, if we can glue two $0$-neighborhoods such that 
the resulting germ admits no foliation, then the same will hold true for any neighborhood constructed by patching
additional $0$-neighborhoods.

\subsection{Some definitions}
A neighborhood germ $(V,D)$ of $D=D_1\cup D_2$ as above can be described as follows.
Consider $S_0=\mathbb P^1\times\mathbb P^1$ viewed as compactification of $\mathbb C^2$
with coordinate $(x,y)$. Define the curve $D\subset S_0$ as the closure
of $xy=0$. Let $D_1$ (resp. $D_2$) be the component defined by $y=0$ (resp. $x=0$). 
Denote by $(V_i,D_i)$ a copy of the neighborhood germ $(S_0,D_i)$ for $i=1,2$, 
and by $p_i\in D_i$ the point corresponding to $(x,y)=0$.
Finally, given a biholomorphism germ $\Phi: (\mathbb{C}^2,0)\rightarrow (\mathbb{C}^2,0)$,
we define by $(V_\Phi,D)$ the neighborhood germ obtained by gluing 
$$(V_2,D_2)\stackrel{\Phi}{\longrightarrow}(V_1,D_1).$$
For instance, $(V_{\text{id}},D)\simeq(S_0,D)$.
A general neighborhood germ $(V,D)$, with each $D_i\cdot D_i=0$ for $i=1,2$,
is isomorphic to $(V_\Phi,D)$ for some $\Phi\in\Diff(\CC^2,0)$.
Moreover, playing automorphims of each $(V_i,D_i)$, we can assume $\Phi$ tangent to the identity,
i.e.
$$\Phi\in\Diffun=\left\{(\phi,\psi)=\left(x+\sum_{i+j>1}a_{ij}x^iy^j,\ y+\sum_{i+j>1}b_{ij}x^iy^j\right)\ \in\mathbb C\{x,y\}^2\right\}.$$

\begin{figure}[ht!]
  \centering
    \includegraphics[scale=0.4]{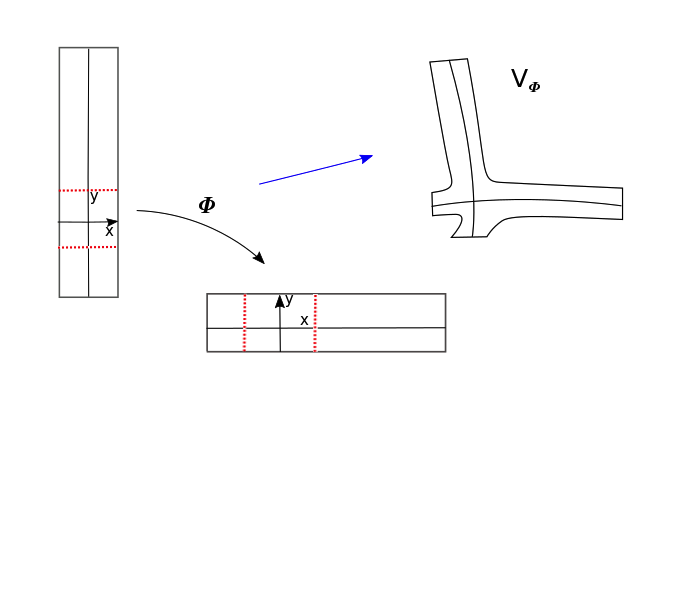}
  \caption{Gluing two 0-Neighborhoods}
  \label{fig:TwoLines}
\end{figure}

\subsection{Singular foliations}
A foliation $\mathcal F$ on the neighborhood germ $(V_{\Phi},D)$ is defined by its tangent sheaf $T_{\mathcal F}\subset TV_{\Phi}$,
which is itself defined in each chart $V_i$ by $\ker(\omega_i)$ where $\omega_i$ is a germ of non trivial meromorphic $1$-form.
The compatibility condition between the two charts writes 
$$\Phi^*\omega_1 \wedge \omega_2 =0.$$
For instance, if $f$ is a non constant meromorphic function on $V_{\Phi}$, then $df$ is the $1$-form defining such a foliation.

\begin{lemma}\label{Lem:polynomial1forms}
Maybe multiplying each $\omega_i$ by a meromorphic function, we may assume
$$\omega_i=P_i(x,y)dx+Q_i(x,y)dy$$
with 
\begin{itemize}
\item $P_1,Q_1\in\CC\{y\}[x]$ (polynomial in $x$),
\item $P_2,Q_2\in\CC\{x\}[y]$ (polynomial in $y$).
\end{itemize}
Moreover, we can take $P_i$, $Q_i$ with no common factor.
\end{lemma}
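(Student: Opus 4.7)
The plan is to multiply each $\omega_i$ by a suitable meromorphic function so that its coefficients become polynomial in the appropriate variable, then divide out a greatest common divisor. The heart of the argument will be the identification of meromorphic functions on $V_1\simeq\mathbb P^1\times(\mathbb C,0)$ with the fraction field $\mathrm{Frac}(\mathbb C\{y\}[x])$; once this is established, the rest is routine bookkeeping.

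I would first compute $\mathrm{Pic}$ and global sections of line bundles on $V_1$. Let $\pi_1:V_1\to\mathbb P^1$ and $\pi_2:V_1\to(\mathbb C,0)$ be the two projections. Since $V_1$ retracts onto $D_1\simeq\mathbb P^1$ and $H^1(V_1,\mathcal O)=0$ (by K\"unneth, as $(\mathbb C,0)$ is Stein and $H^1(\mathbb P^1,\mathcal O)=0$), the exponential sequence gives $\mathrm{Pic}(V_1)\cong H^2(V_1,\mathbb Z)\cong\mathbb Z$, generated by $\pi_1^*\mathcal O_{\mathbb P^1}(1)$. A proper-base-change computation for $\pi_2$ then yields
\[
H^0\bigl(V_1,\pi_1^*\mathcal O(k)\bigr)\;=\;\mathbb C\{y\}[x]_{\le k}\qquad(k\ge 0),
\]
i.e.\ polynomials in $x$ of degree $\le k$ with coefficients in $\mathbb C\{y\}$.

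Next, I would apply this to any $f\in\mathcal M(V_1)$. Let $D=(f)_\infty$; since $\mathcal O(D)$ has a canonical section $s_D$, we have $\mathcal O(D)=\pi_1^*\mathcal O(k)$ with $k=D\cdot F\ge 0$ for $F=\mathbb P^1\times\{y_0\}$ a generic fibre of $\pi_2$. Both $s_D$ and $fs_D$ are then holomorphic sections of $\pi_1^*\mathcal O(k)$, hence elements of $\mathbb C\{y\}[x]_{\le k}$, and $f=(fs_D)/s_D\in\mathrm{Frac}(\mathbb C\{y\}[x])$. Writing now $\omega_1=A\,dx+B\,dy$ in the affine chart, meromorphy of $\omega_1$ on the whole of $V_1$ (in particular at $x=\infty$) forces $A,B\in\mathcal M(V_1)=\mathrm{Frac}(\mathbb C\{y\}[x])$. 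Multiplying $\omega_1$ by a common denominator $R\in\mathbb C\{y\}[x]$ yields $R\omega_1=P_1\,dx+Q_1\,dy$ with $P_1,Q_1\in\mathbb C\{y\}[x]$, defining the same foliation. Since $\mathbb C\{y\}[x]$ is a polynomial ring over a DVR, hence a UFD, dividing by $\gcd(P_1,Q_1)$ eliminates any common factor. The perfectly symmetric argument, with the roles of $x$ and $y$ exchanged, applies to $V_2\simeq(\mathbb C,0)\times\mathbb P^1$ and produces $\omega_2=P_2\,dx+Q_2\,dy$ with coprime $P_2,Q_2\in\mathbb C\{x\}[y]$.

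The hardest step is the structural identification $\mathcal M(V_1)=\mathrm{Frac}(\mathbb C\{y\}[x])$, which rests on the two ingredients above: the Picard calculation and the explicit form of global sections of $\pi_1^*\mathcal O(k)$. Both ultimately come from the properness of $\pi_2$ combined with the vanishing of higher cohomology of $\mathcal O_{\mathbb P^1}(k)$ for $k\ge 0$. Everything else is linear algebra in the UFD $\mathbb C\{y\}[x]$.
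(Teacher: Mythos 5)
Your proof is correct and follows essentially the same route as the paper's: both reduce the lemma to the structural fact that global meromorphic functions on the trivial $0$-neighborhood $V_i\simeq\mathbb P^1\times(\mathbb C,0)$ are rational in the $\mathbb P^1$-variable, i.e.\ lie in $\mathrm{Frac}(\mathbb C\{y\}[x])$ (resp.\ $\mathrm{Frac}(\mathbb C\{x\}[y])$). The only differences are ones of presentation: the paper asserts this rationality directly for the slope function $f_2=-P_2/Q_2$ of the foliation (treating the vertical fibration as a separate case), while you prove it via $\mathrm{Pic}(V_1)\cong\mathbb Z$ and the computation of $H^0(V_1,\pi_1^*\mathcal O(k))$, and then apply it to both coefficients $A,B$ of $\omega_1$ simultaneously, which dispenses with the case split and supplies the justification the paper leaves implicit.
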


\begin{proof}
If the restriction $\mathcal F\vert_{V_2}$ coincides with the vertical fibration $x=\text{constant}$, then we can set $\omega_2=dx$.
If not, then $\mathcal F\vert_{V_2}$ is globally defined by $\frac{dy}{dx}=f_2(x,y)$
for a global meromorphic function $f_2$ on $V_2$. This latter one must be rational in $y$-variable, i.e. can be decomposed
as $f_2=-P_2/Q_2$ where $P_2,Q_2\in\CC\{x\}[y]$; therefore, $\mathcal F\vert_{V_2}$ is defined by $\ker(\omega_2)$
with $\omega_2=P_2 dx+Q_2 dy$. A similar argument gives the decomposition for $\omega_1$. 
\end{proof}

\begin{remark}A formal foliation on $(V_\Phi,D)$ is defined in charts by $\ker(\omega_i)$ with
$\omega_i=P_i(x,y)dx+Q_i(x,y)dy$ and 
\begin{itemize}
\item $P_1,Q_1\in\CC[[y]][x]$ (polynomial in $x$, formal in $y$),
\item $P_2,Q_2\in\CC[[x]][y]$ (polynomial in $y$, formal in $x$).
\end{itemize}
In fact, we will not use the convergence of $\omega_i$ along our construction and prove
the non existence of formal foliations. However, thinking back to $(+d)$-neighborhoods
of smooth rational curves $(S,C)$, it is known that formal foliations are actually analytic
(see \cite[Property G3]{HM}).
\end{remark}

For a foliation $\mathcal F$ defined in charts by $\ker(\omega_i)$ as in Lemma \ref{Lem:polynomial1forms}, we define
\begin{itemize}
\item $k(\mathcal F)=\mathrm{Max}\left\{\deg_x\omega_1,\deg_y\omega_2\right\}$ the ``degree'' of the foliation,
\item $\nu(\mathcal F)=\mathrm{Min}\left\{\ord_{(x,y)}(P_1),\ord_{(x,y)}(Q_1)\right\}=\mathrm{Min}\left\{\ord_{(x,y)}(P_2),\ord_{(x,y)}(Q_2)\right\}$ the multiplicity (or vanishing order) at $0$.
\end{itemize}
We say that $\mathcal F$ is of type $(k,\nu)$.

\section{Obstructions for $(k,\nu)$-foliations}

The goal of this section is to prove that a generic perturbation $(V_\Phi,D)$ of a given neighborhood $(V_{\Phi_0},D)$
does not admit foliations of type $(k,\nu)$. In fact, a generic perturbation of a finite set of coefficients of $\Phi_0$,
of arbitrary large order, will provide such a $\Phi$. In order to settle the precise statement, let us introduce some 
definitions of jets of diffeomorphisms.

\subsection{Some notations}
For $N>0$ natural number, we write 
$$J^N\sum_{i,j\ge0}a_{ij}x^iy^j=\sum_{i,j\ge0}^{i+j\le N}a_{ij}x^iy^j$$ 
the jet of order $N$ of a power-series. For $N_0<N_1$, we will denote 
$$J_{N_0}^{N_1}\sum_{i,j\ge0}a_{ij}x^iy^j=\sum_{N_0<i+j\le N_1}a_{ij}x^iy^j$$ 
the truncation between degree $N_0$ and degree $N_1$. For $\Phi=(\phi,\psi)\in\Diffun$, we similarly define
$$J^N\Phi=(J^N\phi,J^N\psi)\ \ \ \text{and}\ \ \ 
J_{N_0}^{N_1}\Phi=(J_{N_0}^{N_1}\phi,J_{N_0}^{N_1}\psi)$$
and we define by $J^N\Diffun$ and $J_{N_0}^{N_1}\Diffun$ the corresponding sets of truncated diffeomorphisms.
Finally, if we define 
$$\OO=\CC\{x,y\}\ \ \ \text{and}\ \ \ \Omega^1=\OO\cdot dx+\OO\cdot dy$$
and 
$$\Omega_1^{(k,\nu)}=\left\{\omega_1\in\Omega^1\ ;\ \deg_x(\omega_1)\le k,\ \ord_{(x,y)}(\omega_1)=\nu\right\}.$$
We define in a similar way
$$\Omega_2^{(k,\nu)}=\left\{\omega_2\in\Omega^1\ ;\ \deg_y(\omega_2)\le k,\ \ord_{(x,y)}(\omega_2)=\nu\right\}$$
as well as $J^N\OO$ and  $J^N\Omega_i^{(k,\nu)}$ in the obvious way.

\subsection{Obstruction Lemmae}

This section is devoted to prove the:

\begin{prop}\label{important proposition}
For any $k,\nu,N_0\in\Z_{\ge0}$, and $\Phi_0 \in J^{N_0}\Diffun$,
there exist a non empty Zariski open set $U \subset J^{N_1}_{N_0}\Diffun$ for some $N_1>N_0$, 
such that:
\begin{itemize}
\item[] if $\Phi \in \Diffun$ satisfies $J^{N_0}\Phi =\Phi_0$ and $J^{N_1}_{N_0}\Phi \in U$, \\
 then $V_{\Phi}$ does not have any formal foliation of type $(k, \nu)$. 
\end{itemize}
\end{prop}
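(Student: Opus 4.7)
The plan is to work entirely inside finite-dimensional truncated jet spaces and obtain $U$ by a dimension count in which the degrees of freedom of the patching grow strictly faster than the degrees of freedom of a foliation of bounded type. Fix $k,\nu,N_0$ and $\Phi_0$. For any integer $N_1>N_0$ (to be chosen large enough at the end), I would consider the algebraic incidence variety
$$\WW_{N_1}:=\Bigl\{(\omega_1,\omega_2,\Phi)\in J^{N_1}\Omega_1^{(k,\nu)}\times J^{N_1}\Omega_2^{(k,\nu)}\times J^{N_1}\Diffun\ :\ J^{N_1}(\Phi^*\omega_1\wedge\omega_2)=0,\ J^{N_0}\Phi=\Phi_0\Bigr\}.$$
If $V_{\Phi}$ carries a formal foliation of type $(k,\nu)$, then by Lemma~\ref{Lem:polynomial1forms} it is defined by a pair $(\omega_1,\omega_2)$ whose $N_1$-truncations give a point of $\WW_{N_1}$. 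It is therefore enough to show that, for some $N_1$, the image of the projection $\pi_\Phi:\WW_{N_1}\to J^{N_1}\Diffun$ to the $\Phi$-factor is a proper constructible subset of $\{\Phi : J^{N_0}\Phi=\Phi_0\}$; its complement yields the desired non-empty Zariski open $U\subset J^{N_1}_{N_0}\Diffun$.

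The comparison rests on three asymptotic dimension estimates in $N_1$, with $k,\nu,N_0$ held fixed. First, $\dim J^{N_1}\Omega_i^{(k,\nu)}\le 2(k+1)(N_1+1)$ is linear in $N_1$, because the coefficients of $\omega_i$ are polynomial of degree $\le k$ in one variable with formal coefficients in the other. Second, $\dim\{\Phi\in J^{N_1}\Diffun: J^{N_0}\Phi=\Phi_0\}=2\binom{N_1+2}{2}-2\binom{N_0+2}{2}=N_1^2+O(N_1)$ is quadratic. Third, and most delicately, for each fixed $(\omega_1,\omega_2)$ the fiber of the other projection $\pi_\omega:\WW_{N_1}\to J^{N_1}\Omega_1^{(k,\nu)}\times J^{N_1}\Omega_2^{(k,\nu)}$ is, whenever non-empty, a coset of the stabilizer
$$\mathrm{Stab}(\omega_1):=\bigl\{\Psi\in J^{N_1}\Diffun\ :\ J^{N_1}(\Psi^*\omega_1\wedge\omega_1)=0\bigr\}.$$
My goal would be to establish the key bound $\dim\mathrm{Stab}(\omega_1)\le\binom{N_1+2}{2}+O(N_1)=\tfrac12 N_1^2+O(N_1)$ on a Zariski-open locus of form parameters. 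Granting it,
$$\dim\WW_{N_1}\ \le\ \dim\bigl(J^{N_1}\Omega_1^{(k,\nu)}\times J^{N_1}\Omega_2^{(k,\nu)}\bigr)+\max_{(\omega_1,\omega_2)}\dim\pi_\omega^{-1}(\omega_1,\omega_2)\ =\ \tfrac12 N_1^2+O(N_1),$$
which is strictly less than $N_1^2+O(N_1)$ for $N_1$ large, completing the proof.

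The principal obstacle is the stabilizer estimate. Its Zariski tangent space at the identity consists of vector fields $X$ vanishing to order $\ge2$ at the origin such that $L_X\omega_1\wedge\omega_1=0$, and these split as $X=X_{\mathrm{tan}}+X_{\mathrm{tr}}$: the tangent part $X_{\mathrm{tan}}=f(Q_1\partial_x-P_1\partial_y)$ with $f\in\OO$ is a free $\OO$-module of rank one, contributing exactly $\binom{N_1+2}{2}+O(N_1)$ dimensions at jet level $N_1$; the transverse part $X_{\mathrm{tr}}$ is a finite-dimensional space of transverse infinitesimal symmetries for generic $\omega_1$. The transverse symmetry dimension may jump on a proper closed sublocus of the form parameters, and I would simply restrict to its Zariski-open complement (which is sufficient to produce an open $U$ after projection, since the excluded pairs contribute only a lower-dimensional piece to $\WW_{N_1}$). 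The decisive factor-of-two gap between $\dim\mathrm{Stab}(\omega_1)\sim\tfrac12 N_1^2$ and $\dim\{\Phi:J^{N_0}\Phi=\Phi_0\}\sim N_1^2$ realises the heuristic of the introduction: the number of patching parameters grows strictly faster than the foliation parameters of bounded type, so generic patching admits no such foliation.
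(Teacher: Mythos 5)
Your global strategy --- compare, inside finite jet spaces, the linearly-growing number of parameters of a type $(k,\nu)$ foliation against the quadratically-growing number of parameters of the gluing, and conclude that the projection of the incidence variety to the $\Phi$-factor cannot dominate --- is the same heuristic the paper uses, and your three asymptotic counts ($O(N_1)$ for the forms, $N_1^2+O(N_1)$ for the diffeomorphisms, $\tfrac12N_1^2+O(N_1)$ for the stabilizer) are the correct ones. But you organize the count dually to the paper: the paper bounds from below the rank of the compatibility map $E(\Phi,\omega_1,\omega_2)=\Phi^*\omega_1\wedge\omega_2$ degree by degree (Lemma \ref{Lem:rank}), by showing that the linear map $\Phi^{N+1}\mapsto(\Lie_{\partial\Phi^{N+1}}\omega_1^\nu)\wedge\omega_2^\nu$ has rank $\ge N$ already on the subspace of perturbations $(xH^N,yH^N)$, \emph{at every point of the zero locus}; combined with Lemma \ref{lem:strictZariski} this controls $\dim Z^{N+1}$ directly, with no need to understand fibers or symmetry groups. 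You instead try to bound the fibers of the projection to the space of forms by the stabilizer of $\ker\omega_1$.

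This is where the genuine gap lies. Your key estimate $\dim\mathrm{Stab}(\omega_1)\le\tfrac12N_1^2+O(N_1)$ is asserted rather than proved, and the fallback you propose for the locus where it might fail is circular: you restrict to a Zariski-open set of forms with small transverse symmetry and dismiss the excluded pairs because they ``contribute only a lower-dimensional piece to $\WW_{N_1}$''. But the dimension of that piece is the dimension of the bad locus of forms \emph{plus} the dimension of the fibers over it, and those fibers are precisely what you have no control over --- a priori they could fill the whole $N_1^2$-dimensional $\Phi$-space, in which case the projection of that excluded piece alone is dominant and no nonempty open $U$ results; the $O(N_1)$-dimensional base of forms is far too small for its codimension to compensate. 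To close the argument you must prove the stabilizer bound uniformly over all nonzero $(\omega_1,\omega_2)$ occurring in the incidence variety (the tangent part $f\,(Q_1\partial_x-P_1\partial_y)$ indeed gives $\sim\tfrac12N_1^2$; for the transverse quotient one would need to show it injects, compatibly with the jet filtration up to a bounded shift, into functions of one variable --- note it is \emph{not} finite-dimensional even for such simple forms as $\omega_1=dy$, though it is still $O(N_1)$ there). There are also smaller truncation issues: deriving $\Phi'\circ\Phi^{-1}\in\mathrm{Stab}(\ker\omega_1)$ from the truncated conditions $J^{N_1}(\Phi^*\omega_1\wedge\omega_2)=J^{N_1}(\Phi'^*\omega_1\wedge\omega_2)=0$ requires dividing by coefficients of $\omega_2$ vanishing to order $\nu$, so the coset description only holds modulo jets of order $N_1-O(\nu)$; this is harmless asymptotically but needs to be said. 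The paper's explicit rank computation sidesteps all of these points, which is why it is the shorter route.
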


\begin{remark}As we shall see, $N_1$ does not depend on $\Phi_0$, but $U$ does. 
\end{remark}

Given a gluing diffeomorphism $\Phi$ and foliations of type $(k,\nu)$ defined by $\ker(\omega_i)$
on $(V_i,D_i)$, for $i=1,2$, then the compatibility condition 
$$\Phi^*\omega_1 \wedge \omega_2=f(x,y)\cdot dx\wedge dy$$
gives a germ of power-series $f\in\OO$ that is identically zero if, and only if, 
the two foliations patch into a global foliation on $(V_\Phi,D)$. Consider the map
$$\begin{matrix}\Diffun\times\Omega_1^{(k,\nu)}\times\Omega_2^{(k,\nu)}&\stackrel{E}{\longrightarrow}&\OO;\\ 
(\Phi,\omega_1,\omega_2)&\mapsto &f.\end{matrix}$$
If we denote by $Z=E^{-1}(0)$ the preimage of the zero power-series, at the end, we will 
prove that the projection $W=\pr_1(Z)$ of $Z$ towards $\Diffun$ is not onto. In other word, there exist
$\Phi\in\Diffun\setminus W$ which, by definition, are such that $V_\Phi$ carries no $(k, \nu)$-foliations.
In order to do this, we will work on finite jets, on which the existence of $(k,\nu)$-type
foliations are characterized by algebraic subsets $Z(k,\nu)$ and $W(k,\nu)$. By a dimension argument, 
we will show that $\cod(W(k,\nu))>0$.

A first easy Lemma is 

\begin{lemma}\label{TruncationLemma}
The jet $J^{N+2\nu}E$ only depends on $J^{N+1}\Phi$ and $J^{N+\nu}\omega_i$ for $i=1,2$.
\end{lemma}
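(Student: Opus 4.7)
The plan is to expand $f = E(\Phi, \omega_1, \omega_2)$ explicitly and then track vanishing orders. Writing $\omega_i = P_i\,dx + Q_i\,dy$ and $\Phi = (\phi, \psi)$, a direct computation of $\Phi^*\omega_1 \wedge \omega_2 = f\, dx\wedge dy$ gives
$$f = \bigl[(P_1\!\circ\!\Phi)\partial_x\phi + (Q_1\!\circ\!\Phi)\partial_x\psi\bigr] Q_2 - \bigl[(P_1\!\circ\!\Phi)\partial_y\phi + (Q_1\!\circ\!\Phi)\partial_y\psi\bigr] P_2.$$
This displays $f$ as a sum of triple products $A\cdot B\cdot C$ in which $A\in\{P_1\!\circ\!\Phi, Q_1\!\circ\!\Phi\}$ has order $\ge\nu$ (since $\Phi\in\Diffun$ is tangent to the identity, so composition preserves vanishing orders), $B$ is a first partial derivative of $\phi$ or $\psi$ and has order $\ge 0$, and $C\in\{P_2,Q_2\}$ has order $\ge\nu$. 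In particular, every monomial of $f$ already has order $\ge 2\nu$, consistent with the claim.

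The statement of the lemma now reduces to the two independence checks below. \textbf{(a)} If we modify the coefficients of $\omega_1$ or $\omega_2$ by a term of order $\ge N+\nu+1$, then the change in $f$ has order $\ge N+2\nu+1$: indeed, such a perturbation $p$ pulls back, by a map tangent to the identity, to something of the same order, and inside $f$ is then multiplied by factors of orders $\ge 0$ and $\ge\nu$. \textbf{(b)} If we replace $\Phi$ by $\Phi+\eta$ with $\ord(\eta)\ge N+2$, the change in $f$ is again of order $\ge N+2\nu+1$: a first-order Taylor expansion shows that $P_i\circ(\Phi+\eta)-P_i\circ\Phi$ has order $\ge(\nu-1)+(N+2)=N+\nu+1$, while $\partial_\bullet(\phi+\eta_1)-\partial_\bullet\phi$ has order $\ge N+1$; in either case, substituting into the formula for $f$ produces triple products of total order $\ge N+2\nu+1$.

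Taken together, (a) and (b) say exactly that $J^{N+2\nu}f$ is determined by $J^{N+\nu}\omega_i$ and $J^{N+1}\Phi$, which is the lemma. There is essentially no serious obstacle here — this is a routine truncation statement whose proof is bookkeeping on vanishing orders. The only point needing mild care is that differentiation costs exactly one in the order of vanishing, which is precisely why the jet order of $\Phi$ is allowed to be $N+1$ whereas that of $\omega_i$ must be $N+\nu$.
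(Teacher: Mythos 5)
Your proof is correct: the explicit coordinate formula for $f$ is right, and the two perturbation checks (a) and (b) do establish exactly the independence statement of the lemma, with the order counts $(N+\nu+1)+0+\nu$ and $(\nu-1)+(N+2)+0+\nu$ both exceeding $N+2\nu$. The route is genuinely different in organization from the paper's. You expand $E$ in closed form and bound the order of the \emph{difference} $E(\Phi+\eta,\omega+p)-E(\Phi,\omega)$ by Taylor bookkeeping; the paper instead decomposes $\Phi$ and $\omega_i$ into homogeneous components and identifies the precise homogeneous degree in which each component first appears in $E$, namely that $\Phi^n$ first enters through the term $(\Lie_{\partial\Phi^n}\omega_1^\nu)\wedge\omega_2^\nu$ in degree $n+2\nu-1$ and $\omega_i^{n_i}$ first enters through $\omega_1^{n_1}\wedge\omega_2^\nu$ (resp.\ $\omega_1^\nu\wedge\omega_2^{n_2}$) in degree $n_i+\nu$. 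Your argument is more elementary and self-contained, but the paper's version buys something you do not get: the explicit leading-order formula $(\Lie_{\partial\Phi^{N+1}}\omega_1^\nu)\wedge\omega_2^\nu$ is precisely the linear map whose rank is estimated in Lemma \ref{Lem:rank}, so the homogeneous decomposition is not just a proof device for the truncation lemma but the computational backbone of the whole rank argument. One minor point of care in your step (b): when both $P_1\circ\Phi$ and the partials of $\phi,\psi$ change simultaneously, you should say explicitly that the difference of a triple product decomposes as a telescoping sum $(A'-A)B'C'+A(B'-B)C'+AB(C'-C)$, each summand containing one difference factor; your phrase ``in either case'' gestures at this but does not state it.
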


\begin{proof}If we expand $\Phi$, $\omega_i$ in homogeneous components as follows
$$\Phi=\underbrace{id+\cdots+\Phi^{n-1}}_{\Phi^{<n}}+\Phi^n+\cdots\ \ \ 
\text{and}\ \ \ 
\omega_i=\underbrace{\omega_i^\nu+\ldots+\omega_i^{n_i-1}}_{\omega_i^{<n_i}}+\omega_i^{n_i}+\cdots$$ 
then the first occurence of $\Phi^n$ in the expansion of $E$ is given by
$$
\Phi^*\omega_1 \wedge \omega_2=(\Phi^{<n})^*\omega_1 \wedge \omega_2+\underbrace{(\Lie_{\partial\Phi^n}\omega_1^\nu) \wedge \omega_2^\nu}_{\text{order}\ n+2\nu-1}+\text{ (higher order terms)}
$$
where $\partial\Phi^n$ denotes the vector field $\phi^n\partial_x+\psi^n\partial_y$ and $\Lie_{\partial\Phi^n}$ the corresponding  Lie derivative:
$$\Lie_{\partial\Phi^n}\omega=(\phi^n P_x+\psi^n P_y)dx+(\phi^n Q_x+\psi^n Q_y)dy+Pd\phi^n+Qd\psi^n$$
where $\omega=P dx+Q dy$ (and $P_x=\partial_xP$, \ldots). 
Similarly, the first occurence of the terms $\omega_i^{n_i}$, $i=1,2$, in the expansion of $E$ are given by
$$\begin{matrix}
\Phi^*\omega_1 \wedge \omega_2&= & \Phi^*\omega_1^{<n_1} \wedge \omega_2+\underbrace{\omega_1^{n_1} \wedge \omega_2^\nu}_{\text{order}\ n_1+\nu}+\text{ (higher order terms)}\\
&=& \Phi^*\omega_1 \wedge \omega_2^{<n_2}+\underbrace{\omega_1^\nu \wedge \omega_2^{n_2}}_{\text{order}\ n_2+\nu}+\text{ (higher order terms)}\end{matrix}$$
Therefore, $J^{N+2\nu}E$ depends on $J^n\Phi$ and $J^{n_i}\omega_i$ with $n+2\nu-1=n_i+\nu=N+2\nu$.
\end{proof}

In particular, we have a commutative diagram:
\vskip0.5cm
\xymatrix{
    \Diff\times \Fol \ar[r]^-{E} \ar[dd]_{pr_1} \ar@{.>}[rdd]  & \OO \ar@{.>}[rrdd] && \\
     & & & \\
     \Diff \ar@{.>}[rdd] & J^{N+1}\Diff\times J^{N+\nu}\Fol \ar[rr]^-{J^{N+2\nu}E} \ar[dd]_{pr_1} && J^{N+2\nu}\OO \\ && \\
    & J^{N+1}\Diff & &
}\vskip0.5cm

\noindent where 
$$\Diff=\Phi_0+J_{N_0}^{\infty}\Diffun\ \ \ \text{and}\ \ \ \Fol=\Omega_1^{(k,\nu)}\times\Omega_2^{(k,\nu)}$$ 
and dotted arrows stand for natural projections onto jets. 
After restricting on jets, we get algebraic morphisms between quasi-projective varieties.
Denote by $Z^{N+1}=J^{N+2\nu}E^{-1}(0)$ the preimage of the zero $(N+2\nu)$-jet, and by
$W^{N+1}=\pr_1(Z^{N+1})\subset J^{N+1}\Diff$ its projection of $(N+1)$-jets of diffeomorphisms.

\begin{lemma}\label{lem:strictZariski}
The Zariski closure $\overline{W^{N+1}}\subset J^{N+1}\Diff$ is a strict subset provided that 
$$\rank_p(J^{N+2\nu}E)>\dim(J^{N+\nu}\Fol)$$
at every point $p\in Z^{N+1}$.
\end{lemma}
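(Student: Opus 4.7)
My plan is a straightforward tangent-space dimension count. The ambient space $J^{N+1}\Diff$, being the set of $(N+1)$-jets of diffeomorphisms tangent to the identity extending the fixed $N_0$-jet $\Phi_0$, is an affine space and hence irreducible of some dimension $D$; showing that $\overline{W^{N+1}}$ is a strict subset therefore reduces to establishing the strict inequality $\dim\overline{W^{N+1}}<D$.

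I would proceed in two steps. First, I would bound $\dim Z^{N+1}$ using the rank hypothesis. Since $J^{N+2\nu}E$ is an algebraic morphism of quasi-projective varieties which is constant (equal to zero) on $Z^{N+1}$, one has the inclusion $T_pZ^{N+1}\subset \ker d_p(J^{N+2\nu}E)$ for every $p\in Z^{N+1}$. Combining this with the general inequality $\dim_p Z^{N+1}\le\dim T_pZ^{N+1}$ valid for any algebraic variety, I obtain
$$\dim_p Z^{N+1}\ \le\ \dim\bigl(J^{N+1}\Diff\times J^{N+\nu}\Fol\bigr)-\rank_p(J^{N+2\nu}E).$$
Plugging in the hypothesis $\rank_p(J^{N+2\nu}E)>\dim J^{N+\nu}\Fol$, the right-hand side is strictly smaller than $D=\dim J^{N+1}\Diff$, so $\dim_p Z^{N+1}<D$ at every $p\in Z^{N+1}$ and hence $\dim Z^{N+1}<D$.

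Second, I would transfer this bound to $W^{N+1}$. Since $W^{N+1}=\pr_1(Z^{N+1})$ is the image of $Z^{N+1}$ under the algebraic projection $\pr_1$, it is constructible by Chevalley's theorem, and a polynomial morphism cannot increase dimension, so $\dim\overline{W^{N+1}}\le\dim Z^{N+1}<D$. As $J^{N+1}\Diff$ is irreducible of dimension $D$, the Zariski closure is forced to be a strict algebraic subset, which is exactly the conclusion.

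I do not expect this lemma to present any real obstacle: it is essentially a dimension-theoretic statement about fibers of algebraic maps, and the two inequalities $T_pZ^{N+1}\subset\ker d_p(J^{N+2\nu}E)$ and $\dim_p Z^{N+1}\le\dim T_pZ^{N+1}$ are standard. The genuine difficulty in establishing the proposition lies elsewhere, namely in later sections where the rank hypothesis will have to be verified for the explicit patching map attached to $\Phi^*\omega_1\wedge\omega_2$; this is the place where the fast growth in jet spaces of the gluing coefficients of $\Phi$, relative to those of $(\omega_1,\omega_2)$ at fixed degree $k$, will be exploited.
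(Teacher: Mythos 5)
Your argument is correct and is essentially the paper's own proof: both bound $\dim Z^{N+1}$ by the ambient dimension minus the rank of the differential of $J^{N+2\nu}E$ (the paper phrases this at smooth points, you via the inclusion $T_pZ^{N+1}\subset\ker d_p(J^{N+2\nu}E)$, which amounts to the same estimate), and both then pass to $W^{N+1}$ via Chevalley constructibility and the fact that a projection cannot raise dimension. No substantive difference.
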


\begin{proof}
The subset $Z^{N+1}\subset J^{N+1}\Diff\times J^{N+\nu}\Fol$ is algebraically closed. We will denote by $\overline{\dim}(Z^{N+1})$
its maximal dimension.
Its projection $W^{N+1}$ needs not be closed 
as the fiber $\Fol$ of the projection is not compact. However, its Zariski closure $\overline{W^{N+1}}$ satisfies
$$\overline{\dim}(\overline{W^{N+1}})\le\overline{\dim}(Z^{N+1})$$
where $\overline{\dim}(\overline{W^{N+1}})$ is again the maximal dimension:
indeed, $W^{N+1}$ is constructible by Chevalley and therefore locally closed (see \cite[exercises II.3.18-19]{Hartshorne}).
On the other hand, we have 
$$\overline{\dim}(Z^{N+1})+\underline{\rank}(J^{N+2\nu}E)\le\dim(J^{N+1}\Diff)+\dim(J^{N+\nu}\Fol)$$
where $\underline{\rank}(J^{N+2\nu}E)$ is the minimal rank of the map. Indeed, dimension is computed at
smooth points $p$ of $Z^{N+1}$, and at those points, we locally have equality. 
But $\rank_p(J^{N+2\nu}E)$ might decrise outside of smooth points of $Z$. 
Combining the two inequalities, we deduce for the minimal codimension:
$$\begin{matrix}\underline{\cod}(\overline{W^{N+1}})&=&\dim(J^{N+1}\Diff)-\overline{\dim}(\overline{W^{N+1}})\\
&\ge& \underline{\rank}(J^{N+2\nu}E)-\dim(J^{N+\nu}\Fol).
\end{matrix}$$
If the last expression is $>0$, then $\overline{W^{N+1}}$ is a strict subset of $J^{N+1}\Diff$.
\end{proof}

The proof of Proposition \ref{important proposition} will therefore follow from the fact that 
$\underline{\rank}(J^{N+2\nu}E)$ grows faster than $\dim(J^{N+\nu}\Fol)$ while $N\to\infty$.
Precisely, we prove

\begin{lemma}\label{Lem:rank}
For $N\gg 0$, we have 
$$\underline{\rank}_Z(J^{N+2\nu}E)\ \ge\ \underline{\rank}(J^{N+2\nu-1}E)+ N.$$
\end{lemma}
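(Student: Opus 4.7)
The plan is to exploit the block structure of $dJ^{N+2\nu}E$ at a point $p\in Z^{N+1}$. Splitting the source $J^{N+1}\Diff\times J^{N+\nu}\Fol$ into the ``old'' variables $J^N\Diff\times J^{N+\nu-1}\Fol$ plus the ``new'' top-degree parts $(\Phi^{N+1},\omega_1^{N+\nu},\omega_2^{N+\nu})$, and splitting the target $J^{N+2\nu}\OO$ into $J^{N+2\nu-1}\OO$ plus its degree-$(N+2\nu)$ part, Lemma \ref{TruncationLemma} says that lower-order outputs do not depend on the top-degree inputs, so $dJ^{N+2\nu}E$ is block lower-triangular. The elementary rank bound for such matrices yields
$$\rank_p(J^{N+2\nu}E)\;\geq\;\rank_{p'}(J^{N+2\nu-1}E)+\rank(M_p),$$
where $M_p$ denotes the partial derivative of the degree-$(N+2\nu)$ output in the new variables. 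It therefore suffices to show $\rank(M_p)\geq N$ at every point $p\in Z^{N+1}$.

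From the expansions in the proof of Lemma \ref{TruncationLemma},
$$M_p=\Lie_{\partial\Phi^{N+1}}\omega_1^\nu\wedge\omega_2^\nu+\omega_1^{N+\nu}\wedge\omega_2^\nu+\omega_1^\nu\wedge\omega_2^{N+\nu}$$
(taking the degree-$(N+2\nu)$ component). Since $p\in Z$, the lowest-order obstruction $\omega_1^\nu\wedge\omega_2^\nu=0$ forces $\omega_1^\nu,\omega_2^\nu$ to be collinear, so I write $\omega_i^\nu=h_i\eta$ with $\eta=P\,dx+Q\,dy$ a primitive homogeneous $1$-form of some degree $\delta\leq\nu$ (i.e.\ $\gcd(P,Q)=1$) and $h_i\neq0$. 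Focusing on the $\Phi^{N+1}$-contribution and setting $\xi:=\iota_v\eta=v_1P+v_2Q$ for $v=\partial\Phi^{N+1}$, Cartan's formula gives
$$\Lie_v\eta\wedge\eta=L(\xi)\,dx\wedge dy,\qquad L(\xi):=Q\xi_x-P\xi_y-(Q_x-P_y)\xi.$$
Since $(P,Q)$ is a regular sequence, the map $v\mapsto\xi$ is surjective (for $N\geq\delta-1$) onto the space $R_{N+1+\delta}$ of homogeneous polynomials of degree $N+1+\delta$, and multiplication by $h_1h_2\neq 0$ is injective; hence $\rank(M_p)\geq\rank(L|_{R_{N+1+\delta}})$.

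The crux, and main obstacle, is a uniform lower bound on the rank of $L$. A direct calculation shows that for any two nonzero polynomial solutions $\xi,\xi'$ of $L(\xi)=0$, the ratio $\xi/\xi'$ satisfies $v_{\text{tang}}(\xi/\xi')=0$, i.e.\ it is a rational first integral of the foliation $\eta=0$; thus modulo scalars $\ker L$ embeds into the space of ``Darboux quotients'' of $\eta$. By Darboux--Jouanolou type finiteness, polynomial first integrals of bounded degree form a space whose dimension is uniformly bounded in terms of $\delta\leq\nu$ (using upper semicontinuity of $\dim\ker L$ as $\eta$ ranges over the bounded-degree family). Consequently, for $N\gg_\nu 0$,
$$\rank(L|_{R_{N+1+\delta}})\;\geq\;\dim R_{N+1+\delta}-O_\nu(1)\;=\;N+2+\delta-O_\nu(1)\;\geq\;N,$$
which gives $\rank(M_p)\geq N$ and completes the proof.
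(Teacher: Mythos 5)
Your overall architecture is sound and very close to the paper's: the block-triangular rank bound, the reduction (using $\omega_1^\nu\wedge\omega_2^\nu=0$ on $Z$) to a linear map in $\Phi^{N+1}$ alone, and the observation that the ratio of two kernel elements is a rational first integral of $\ker(\omega_2^\nu)$ are exactly the paper's ingredients (the paper restricts $\Phi^{N+1}$ to the subspace $(xH^N,yH^N)$ rather than parametrizing by $\xi=\iota_v\eta$; these are equivalent reductions, and your computation of $L(\xi)=Q\xi_x-P\xi_y-(Q_x-P_y)\xi$ is correct). The genuine gap is in your final step. ``Upper semicontinuity of $\dim\ker L$ as $\eta$ ranges over the bounded-degree family'' cannot produce a bound uniform in $N$: the spaces $R_{N+1+\delta}$ change with $N$, and semicontinuity only says the kernel dimension can jump \emph{up} on closed subsets of the parameter space, so it yields no constant $O_\nu(1)$; Darboux--Jouanolou finiteness (about invariant curves versus existence of a rational first integral) is likewise not the statement you need, since a foliation admitting a rational first integral can have many Darboux polynomials of a given degree. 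Moreover, even granting a bound $\dim\ker L\le C(\nu)$, your arithmetic $N+2+\delta-C(\nu)\ge N$ requires $C(\nu)\le 2+\delta$, which ``uniformly bounded in terms of $\nu$'' does not deliver. (A bound $N-O_\nu(1)$ would still suffice for the quadratic-growth argument in Proposition~\ref{important proposition}, but it does not prove the Lemma as stated.)

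The fix is to finish the computation you already began instead of appealing to finiteness theorems. You showed that two kernel elements $\xi,\xi'\in R_{N+1+\delta}$ have a ratio $\xi/\xi'$ which is a rational first integral; since it is homogeneous of degree $0$, its level sets are unions of lines through the origin, hence it is constant unless $\eta=0$ is the radial foliation. Thus $\dim\ker(L|_{R_{N+1+\delta}})\le 1$ in the non-radial case, giving rank at least $N+1+\delta\ge N$. In the radial case $\eta=xdy-ydx$ one has $L(\xi)=x\xi_x+y\xi_y-2\xi=N\xi$ on $R_{N+2}$ by Euler's identity, so $L$ is injective and the rank is $N+3$. This is precisely the non-radial/radial dichotomy of the paper's proof; with it your argument closes, and is arguably slightly cleaner than the paper's, since the radial case is handled by the same operator $L$ rather than by switching to a different test subspace of $\Phi^{N+1}$'s.
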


Let us first deduce the 

\begin{proof}[Proof of Proposition \ref{important proposition}]
One easily see that 
$$\dim(J^{N+\nu}\Fol)-\dim(J^{N+\nu-1}\Fol)=4k+4$$
for $N$ large enough. 
Therefore, $\dim(J^{N+\nu}\Fol)\sim 2kN$ when $N\to\infty$.
On the other hand, by Lemma \ref{Lem:rank}, we see that $\underline{\rank}(J^{N+2\nu}E)$
is at least $\sim N^2/2$, so that for $N \gg 0$ large enough we have $\underline{\rank}(J^{N+2\nu}E)>\dim(J^{N+\nu}\Fol)$.
Therefore, by Lemma \ref{lem:strictZariski}, we can set $N_1=N$ and $U=J^{N+1}\Diff\setminus \overline{W^{N+1}}$.
\end{proof}

\begin{proof}[Proof of Lemma \ref{Lem:rank}]
The homogenous part of degree $N+2\nu$ of $E$ is given by
$$
J^{N+2\nu}E - J^{N+2\nu -1}E =(\Lie_{\partial \Phi^{N+1}}\omega_1^\nu)\wedge\omega_2^\nu+\cdots
$$
where dots only depend on $\Phi^{<N+1}$, $\omega_1$ and $\omega_2$. We note that the map 
$$F\ :\ \Phi^{N+1}\mapsto (\Lie_{\partial \Phi^{N+1}}\omega_1^\nu)\wedge\omega_2^\nu$$
is linear. We consider its restriction to the subspace $R$ of those $\Phi^{N+1}=(xH^N,yH^N)$ 
where $H^N$ is a homogeneous polynomial of degree $N$. If we prove that this linear map has rank $\ge N$
in restriction to $R$, then we are done. In restriction to $R$, the linear map becomes
$$F\vert_R\ :\ H\mapsto (\nu+1)H \omega_1^\nu\wedge\omega_2^\nu + (xP_1^\nu+yQ_1^\nu)dH\wedge\omega_2^\nu.$$
Since $\Phi^1=\id$, the compatibility condition of $\omega_1$ and $\omega_2$ in $J^{2\nu+1}E$ give 
$$\omega_1^\nu\wedge\omega_2^\nu=0.$$
Therefore, the linear map writes
$$F\vert_R\ :\ H\mapsto (xP_1^\nu+yQ_1^\nu)dH\wedge\omega_2^\nu.$$
{\bf Assume first} that $xP_1^\nu+yQ_1^\nu\not\equiv0$, i.e. $\ker(\omega_1^\nu)=\ker(\omega_2^\nu)$ do not define the radial foliation.
Then $H$ is in the kernel of the linear map $F\vert_R$ if, and only if
it is a first integral for the foliation $\ker(\omega_2^\nu)$. But in this case, $\ker(F\vert_R)$ has dimension $\le1$, and $F$ has rank $\ge N$. 
Indeed, if $\tilde H$ is another degree $N+1$ homogeneous first integral, then $H/\tilde H$ is also a rational first integral 
for $\ker(\omega_2^\nu)$. But $H/\tilde H$ it is invariant under $(x,y)\mapsto(\lambda x,\lambda y)$, and since  $\ker(\omega_2^\nu)$
is not radial, $H/\tilde H$ must be constant.

{\bf Now assume} that $xP_1^\nu+yQ_1^\nu\equiv0$, i.e. $\omega_i^\nu=H_i\cdot (xdy-ydx)$ with $H_i$ homogeneous 
of degree $\nu-1$ for $i=1,2$. Obviously $H_i\not=0$ and the rank of the linear map $F$ is the same as the rank of 
$$\tilde F\ :\ \Phi^{N+1}\mapsto \Lie_{\partial \Phi^{N+1}}(xdy-ydx)\wedge(xdy-ydx).$$
If we now restrict $F$ to the linear space $L$ of those $\Phi^{N+1}=(\phi,0)$,
then we get $\partial\Phi^{N+1}=\phi\partial_x$ and
$$\tilde F\vert_L\ :\ \phi\mapsto -Ny\phi \cdot dx\wedge dy.$$
Clearly $\ker(\tilde F\vert_L)=0$ and $F$ has rank $N+2$.
\end{proof}

\section{Proof of Theorem \ref{Neighborhood without foliation}}

As a consequence of the previous section we can prove even more. Consider the following norm on power series:
$$\Vert\sum_{i,j\ge0}a_{i,j}x^iy^j\Vert=\mathrm{Sup}\left\{\vert a_{i,j}\vert\right\}\in [0,+\infty).$$
Then we can prove

\begin{thm}
For any $\Phi \in\Diffun$, $N_0 \in \mathbb{N}$ and $\epsilon >0$, there exists $\widehat{\Phi} \in\Diffun$ such that 
\begin{enumerate}
\item $J^{N_0}\Phi =J^{N_0}\widehat{\Phi}$.
\item $\Vert\Phi - \widehat{\Phi}\Vert< \epsilon$.
\item $V_{\widehat{\Phi}}$ does not have any formal foliation.
\end{enumerate} 
\end{thm}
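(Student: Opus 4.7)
The strategy is to apply Proposition \ref{important proposition} once for every possible foliation type $(k,\nu)$ in a diagonal/Baire-type construction, with each application perturbing a narrow range of coefficients of high enough order that the earlier kills remain effective. Enumerate $\Z_{\geq 0}\times\Z_{\geq 0}$ as $\{(k_n,\nu_n)\}_{n\geq 1}$. I will build inductively a sequence $\Phi=\Phi_0,\Phi_1,\Phi_2,\ldots\in\Diffun$ together with integers $M_0<M_1<M_2<\cdots$, starting from $M_0=\max(N_0,1)$, and then set $\widehat{\Phi}:=\lim_n \Phi_n$.

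For the inductive step, given $\Phi_{n-1}$ and $M_{n-1}$, apply Proposition \ref{important proposition} with parameters $(k_n,\nu_n)$, base order $M_{n-1}$, and base jet $J^{M_{n-1}}\Phi_{n-1}$. This produces $M_n>M_{n-1}$ and a non-empty Zariski-open set $U_n\subset J^{M_n}_{M_{n-1}}\Diffun$. Because $U_n$ is non-empty Zariski-open in a finite-dimensional affine space, it is dense in the Euclidean topology, so we can choose $J^{M_n}_{M_{n-1}}\Phi_n\in U_n$ with every coefficient of $\Phi_n-\Phi_{n-1}$ of absolute value less than $\epsilon/2^{n+1}$; let $\Phi_n$ agree with $\Phi_{n-1}$ at all orders $\leq M_{n-1}$ and at all orders $>M_n$. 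The crucial observation is that the hypothesis of Proposition \ref{important proposition} depends only on the jet of $\Phi$ up to order $M_n$, so any future perturbation of strictly higher-order coefficients preserves the property that the resulting neighborhood admits no formal $(k_n,\nu_n)$-foliation.

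Each coefficient of $\widehat{\Phi}$ stabilizes after finitely many steps (as soon as $M_n$ exceeds its total degree), so $\widehat{\Phi}$ is well-defined as a formal series. The bound $\|\Phi_n-\Phi_{n-1}\|<\epsilon/2^{n+1}$ sums to $\|\widehat{\Phi}-\Phi\|<\epsilon$, giving condition (2); since we only perturb orders $>N_0$, condition (1) holds; and since every coefficient of $\widehat{\Phi}-\Phi$ is bounded by $\epsilon$, comparison with the double geometric series shows that $\widehat{\Phi}-\Phi$ converges on the unit bidisk, so $\widehat{\Phi}\in\CC\{x,y\}^2$ and hence $\widehat{\Phi}\in\Diffun$ (the tangent-to-identity part persists because $M_0\geq 1$). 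For condition (3), any formal foliation of $V_{\widehat{\Phi}}$ would have some type $(k_n,\nu_n)$; but $J^{M_{n-1}}\widehat{\Phi}=J^{M_{n-1}}\Phi_{n-1}$ and $J^{M_n}_{M_{n-1}}\widehat{\Phi}\in U_n$, so Proposition \ref{important proposition} rules it out. The only subtle point in the whole argument is the observation, built into the statement of Proposition \ref{important proposition}, that each type-$(k,\nu)$ obstruction is cut out by finitely many coefficients of $\Phi$, so successive kills do not interfere with one another; everything else is standard bookkeeping.
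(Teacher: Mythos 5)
Your proof is correct and follows the same overall strategy as the paper: a recursive perturbation scheme invoking Proposition \ref{important proposition} countably many times, with the limit diffeomorphism avoiding every type $(k,\nu)$. The bookkeeping, however, is genuinely different and worth comparing. The paper applies the Proposition always with the \emph{fixed} base jet $\Phi_0=J^{N_0}\Phi$, obtaining for each type a bad set $W(k,\nu)\subset J^{N_1(k,\nu)}_{N_0}\Diffun$, and then groups the types by the value of $N_1(k,\nu)$; since infinitely many types may share the same $N_1$, the resulting bad set $W_n$ is a countable union of proper Zariski-closed sets, so the paper must argue density of its complement via Lebesgue measure, and since each stage re-perturbs the entire coefficient block from order $N_0$ up to $N_n$, it must also pull back the earlier bad sets through $\pi_{n}^{-1}$ to keep previous kills alive. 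You instead re-base the Proposition at every step (base jet $J^{M_{n-1}}\Phi_{n-1}$), handle exactly one type per step, and confine the $n$-th perturbation to the fresh coefficient block between orders $M_{n-1}$ and $M_n$; each step is then a single non-empty Zariski-open (hence Euclidean-dense) condition on a finite-dimensional affine space, and non-interference of successive steps is automatic because the hypothesis of the Proposition only involves $J^{M_n}\Phi$. Your version trades the measure-theoretic remark and the $\pi_n^{-1}$ bookkeeping for a cleaner induction, and you also make explicit the convergence of $\widehat{\Phi}$ (uniformly bounded coefficients dominated by a double geometric series), which the paper leaves as ``easy to see.'' Both arguments are complete.
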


\begin{proof}
Applying proposition \ref{important proposition} to $\Phi_0 = J^{N_0}\Phi$ we have, for each $(k, \nu)$, a natural number $N_1(k, \nu)$ and a Zariski closed set $W(k,\nu) \subset J_{N_0}^{N_1(k,\nu)}\Diffun$ such that, if $J^{N_1(k,\nu)}\widehat{\Phi} = \Phi_0 + \Phi_1$ with $\Phi_1 \in J_{N_0}^{N_1(k,\nu)}\Diffun\setminus W(k,\nu)$ then $V_{\widehat{\Phi}}$ does not have a formal foliation of type $(k, \nu)$. Let us introduce some notation:
\begin{itemize}
\item $\{N_1(k,\nu): k, \nu \geq 0 \} = \{N_1 < N_2 < \ldots \}$, 
\item $I_n = \{ (k, \nu): N_1(k, \nu)= N_n\}$, 
\item $W_n = \bigcup_{(k,\nu) \in I_n} W(k, \nu)$.
\end{itemize}
So far, $W_n$ need not be closed, i.e. might be an infinite union of closed subsets. 
But its complement is clearly of total Lebesgue measure.
Now, we will define the desided diffeomorphism recursively.

\textbf{Step 1: } Define $\widehat{\Phi}_1 = \Phi + \Phi_1$, where $\Phi_1 \in J_{N_0}^{N_1}\Diffun$ is such that $J_{N_0}^{N_1}\Phi + \Phi_1 \notin W_1$ and $\Vert\Phi_1\Vert <\frac{\epsilon}{2}$. The neighborhood $V_{\widehat{\Phi}_1}$ does not
admit formal foliation of any type $(k,\nu)\in I_1$.

\textbf{Step 2: } Let $\pi_2 : J_{N_0}^{N_2}\Diffun \rightarrow J_{N_0}^{N_1} \Diffun$ be the natural restriction. 
Then we set $W_2' = W_2 \cup \pi_2^{-1}(W_1) \subset J_{N_0}^{N_2} \Diffun$. 
Since the complement of this set is dense, we can define $\widehat{\Phi}_2 = \widehat{\Phi}_1 + \Phi_2$ 
where $\Phi_2 \in J_{N_0}^{N_2}\Diffun$ is such that $J_{N_0}^{N_2}\widehat{\Phi}_1 + \Phi_2 \notin W_2'$ 
and $\Vert\Phi_2\Vert <\epsilon / 4$. The neighborhood $V_{\widehat{\Phi}_2}$ does not
admit formal foliation of any type $(k,\nu)\in I_1\cup I_2$.

\textbf{Recursive step: } Let us assume that we have built $\widehat{\Phi}_n$ such that $J^{N_0}\widehat{\Phi}_n = \Phi_0$ and $J^{N_n}_{N_0}\widehat{\Phi}_n \notin W_{n}'$. Writting $\pi_{n+1} : J_{N_0}^{N_{n+1}}\Diffun \rightarrow J_{N_0}^{N_n} \Diffun$ the natural projection and $W_{n+1}' = W_{n+1} \cup \pi_{n+1}^{-1}(W_n') \subset J_{N_0}^{N_{n+1}} \Diffun$ we can define $\widehat{\Phi}_{n+1} = \widehat{\Phi}_n + \Phi_{n+1}$ where $\Phi_{n+1} \in J_{N_0}^{N_{n+1}} \Diffun$ is such that $J_{N_0}^{N_{n+1}}\widehat{\Phi}_n + \Phi_{n+1} \notin W_{n+1}'$ and $\Vert\Phi_{n+1}\Vert <\epsilon / 2^{n+1}$.

It is easy to see that the limit $\widehat{\Phi}=\lim_{n\to\infty}\widehat{\Phi}_n$ has the desired properties.
\end{proof}

\end{document}